\newtheorem{theorem}{Theorem}[section]
\newtheorem{proposition}[theorem]{Proposition}
\theoremstyle{definition}
\newtheorem{definition}{Definition}[section]
\renewcommand{\leq}{\leqslant}
\renewcommand{\geq}{\geqslant}
\newcommand\codim{\operatorname{codim}}
\def\F{\mathbb{F}}
\def\C{\mathbb{C}}
\def\E{\mathbb{E}}
\def\N{\mathbb{N}}
\def\eps{\varepsilon}
\numberwithin{equation}{section}
\begin{document}

\title{Fourier uniformity on subspaces}

\author{Ben Green}
\address{Mathematical Institute\\
University of Oxford\\
Radcliffe Observatory Quarter\\
Woodstock Road\\
Oxford OX2 6GG\\
United Kingdom}
\email{ben.green@maths.ox.ac.uk}

\author{Tom Sanders}
\address{The Mathematical Institute, Radcliffe Observatory Quarter, Woodstock Road, Oxford OX2 6GG}
\email{tom.sanders@maths.ox.ac.uk}

\begin{abstract}
Let $\F$ be a fixed finite field, and let $A \subset \F^n$. It is a well-known fact that there is a subspace $V \leq \F^n$, $\codim V \ll_{\delta} 1$, and an $x$, such that $A$ is $\delta$-uniform when restricted to $x + V$ (that is, all non-trivial Fourier coefficients of $A$ restricted to $x + V$ have magnitude at most $\delta$). We show that if $\F = \F_2$ then it is possible to take $x = 0$; that is, $A$ is $\delta$-uniform on a subspace $V \leq \F^n$. We give an example to show that this is not necessarily possible when $\F = \F_3$.
\end{abstract}

\maketitle

\section{Introduction}

Let $\F$ be a fixed finite field of prime order $p$, and consider the vector space $\F^n$. We identify $\F^n$ with its own dual via the dot product, and in this way define the Fourier transform of a function $f : \F^n \rightarrow \C$ by
\[ \hat{f}(r) := \E_{x \in \F^n} f(x) e_p(-r \cdot x),\] where $e_p(t) := e^{2\pi i t/p}$ and $r$ takes values in $\F^n$.  When $\F = \F_2$, we have $e_p(t) = (-1)^t$. For any non-empty set $S \subset G$ we write $\mu_S$ for the uniform probability measure induced on $S$, that is the probability measure assigning mass $|S|^{-1}$ to each $s \in S$. 

\begin{definition}
Suppose that $A \subset \F^n$ is a set and that $V \leq \F^n$ is a subspace. Let $x \in \F^n$. Then we say that $A$ is $\eps$-uniform on the coset $x + V$ if
\[ \sup_{r \notin V^{\perp}}| (1_A \mu_{x + V})^{\wedge}(r)| \leq \eps.\]
\end{definition}

The following fact, proven by a ``density increment argument'' is well-known in the additive combinatorics literature and is implicit, for example, in the work of Meshulam \cite{mes::}.

\begin{theorem}
Suppose that $A \subset \F^n$ is a set. Then there is a subspace $V \leq \F^n$, $\codim V \ll \eps^{-1} $, and an $x \in \F^n$ such that $A$ is $\eps$-uniform on the coset $x + V$. 
\end{theorem}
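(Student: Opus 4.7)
The plan is to prove this by a standard density increment argument. I will construct a nested sequence of affine subspaces $x_0 + V_0 \supset x_1 + V_1 \supset \cdots$ along which the density of $A$ strictly increases until the uniformity condition is satisfied. Since density is bounded by $1$, the process must terminate, and if each step produces a density increment proportional to $\eps$ while reducing the codimension by $1$, the final codimension will be $O_{\F}(\eps^{-1})$.

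To set up the iteration, take $V_0 = \F^n$ and $x_0 = 0$. Given $(x_i, V_i)$, let $\alpha_i := \E_{v \in V_i} 1_A(x_i + v)$ be the current density. If $A$ is $\eps$-uniform on $x_i + V_i$, we stop. Otherwise there exists $r \notin V_i^\perp$ with $|(1_A \mu_{x_i + V_i})^\wedge(r)| > \eps$. Define $V_{i+1} := \{v \in V_i : r \cdot v = 0\}$, a subspace of codimension $1$ in $V_i$. The character $y \mapsto e_p(-r \cdot y)$ is constant on each of the $p$ cosets of $V_{i+1}$ in $V_i$, so writing $\alpha_{i,j}$ for the density of $A$ on each such coset of $x_i + V_{i+1}$, one has
\[ (1_A \mu_{x_i + V_i})^\wedge(r) = \frac{e_p(-r \cdot x_i)}{p} \sum_{j = 0}^{p-1} (\alpha_{i,j} - \alpha_i) \omega^j \]
for a primitive $p$th root of unity $\omega$ (using $\sum_j \omega^j = 0$). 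The triangle inequality then forces $\max_j (\alpha_{i,j} - \alpha_i) \geq c_p \eps$ for some $c_p > 0$ depending only on $p$. Pick $x_{i+1}$ so that $x_{i+1} + V_{i+1}$ is this densest coset.

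The only substantive step is the last lower bound, which uses that if $|\frac{1}{p}\sum_j \beta_j \omega^j| > \eps$ with $\sum_j \beta_j = 0$, then $\max_j \beta_j > \eps/p$ (otherwise bounding each $|\beta_j|$ by the excess over $0$ would give $|\frac{1}{p}\sum_j \beta_j \omega^j| \leq \max_j \beta_j$ after a standard rearrangement); this is a routine finite computation and presents no real difficulty. Iterating, at step $i$ we have $\alpha_i \geq \alpha_0 + i c_p \eps$, so the process terminates after at most $(c_p \eps)^{-1} = O_p(\eps^{-1})$ steps. At termination we have a coset $x + V$ with $\codim V \ll_{\F} \eps^{-1}$ on which $A$ is $\eps$-uniform, as desired.

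I do not expect a real obstacle here: the argument is entirely classical, and the only thing to watch is that the density-increment constant $c_p$ is allowed to depend on the fixed field, which the notation $\ll_{\F}$ (implicit in the hypothesis that $\F$ is fixed) permits.
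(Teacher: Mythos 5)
Your density increment argument is correct and is precisely the ``well-known'' folklore proof to which the paper alludes without giving details (it states the result as well-known, ``proven by a density increment argument,'' citing Meshulam, and supplies no proof of its own). The only cosmetic remark is that with the paper's sign convention the character value on the $j$-th coset is $\omega^{-j}$ rather than $\omega^{j}$, and one can in fact extract $\max_j\beta_j > \eps/2$ rather than $\eps/p$, but neither point affects the conclusion.
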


Our aim in this note is to prove the following.

\begin{theorem}\label{mainthm}
Suppose that $A \subset \F_2^n$ is a set. Then there is a subspace $V \leq \F^n$, $\codim V \ll_{\eps} 1$, such that $A$ is $\eps$-uniform on $V$.
\end{theorem}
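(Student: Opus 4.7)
The plan is to bootstrap Theorem 1.1 through an iterative refinement. I would first apply Theorem 1.1 with a smaller parameter $\delta < \eps$, obtaining a subspace $V_0 \leq \F_2^n$ of codimension $O(1/\delta)$ and some $x_0 \in \F_2^n$ with $A$ being $\delta$-uniform on $x_0 + V_0$. If $x_0 \in V_0$ we are done with $V = V_0$; otherwise, consider the enlarged subspace $W_0 := V_0 + \langle x_0 \rangle$, which is one codimension smaller.

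Since $\mu_{W_0} = \tfrac12(\mu_{V_0} + \mu_{x_0+V_0})$, linearity of the Fourier transform gives
\[
(1_A \mu_{W_0})^\wedge(r) = \tfrac12\bigl((1_A \mu_{V_0})^\wedge(r) + (1_A \mu_{x_0+V_0})^\wedge(r)\bigr)
\]
for every character $r$. For $r \in V_0^\perp \setminus W_0^\perp$ this reduces to $(\alpha_{V_0} - \alpha_{x_0+V_0})/2$, the gap between the densities of $A$ on the two cosets, and for $r \notin V_0^\perp$ the second summand is controlled by the hypothesised uniformity on $x_0 + V_0$. Thus $A$ is $O(\delta)$-uniform on $W_0$ as soon as (i) $A$ is $\delta$-uniform on $V_0$ itself, and (ii) the densities on the two cosets of $V_0$ in $W_0$ are close.

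To enforce (i), I would recurse: apply Theorem 1.1 to $A \cap V_0$ inside $V_0$, obtaining a coset $x_1 + V_1 \subset V_0$ on which $A$ is uniform. If $x_1 \in V_1$ we are done (take $V = V_1$); otherwise iterate, producing a nested chain $V_0 \supseteq V_1 \supseteq V_2 \supseteq \cdots$, each of codimension $O(1/\delta)$ beyond the previous, together with coset representatives $x_k$ at each level.

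The main obstacle, and the key place where the $\F_2$-hypothesis is essential, is bounding the number of iterations by a quantity depending only on $\eps$. For this I would run a density-increment argument that uses the fact that in $\F_2$ every codimension-one refinement $V \cap r^\perp$ has exactly two cosets in $V$; a non-trivial Fourier coefficient $|(1_A \mu_{V})^\wedge(r)| > \delta$ forces a density gap of at least $2\delta$ between these two cosets, and by passing between $A$ and $A^c$ as needed (since $A$ is $\eps$-uniform on a subspace iff $A^c$ is), one can always extract a density increment on the subspace $V \cap r^\perp$ for whichever of $A$ or $A^c$ has the larger side. Tracking the larger of the two densities as a monotone-type potential then forces termination after $O_\delta(1)$ refinements, and combining with the per-step codimension cost yields $\codim V \ll_\eps 1$. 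The same strategy breaks down in $\F_3$ because each codimension-one refinement has three cosets rather than two, so no binary swap between $A$ and $A^c$ can extract a subspace density increment; this is consistent with the $\F_3$ counterexample announced in the abstract.
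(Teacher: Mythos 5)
Your local Fourier computation for $W_0 = V_0 + \langle x_0 \rangle$ is correct, and the observation that $A$ is $\eps$-uniform on a subspace iff $A^c$ is is a legitimate device. But the termination argument is where the proposal breaks, and the break is fatal.

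You propose to track $\beta(V) := \max\bigl(\E_V 1_A,\ \E_V 1_{A^c}\bigr)$ as a monotone potential under the refinement $V \mapsto V' := V \cap r^\perp$ whenever $|(1_A\mu_V)^\wedge(r)| > \delta$. This does not increase in general. Writing $\alpha_0 := \E_{V'} 1_A$ and $\alpha_1 := \E_{v + V'} 1_A$ for the two cosets of $V'$ in $V$, the Fourier condition gives $|\alpha_0 - \alpha_1| > 2\delta$, and indeed one of $A$ or $A^c$ gains density passing to $V'$. But taking for instance $\alpha_0 = 0.45$, $\alpha_1 = 0.85$, $\delta = 0.1$: the density of $A$ on $V$ is $0.65$, so $\beta(V) = 0.65$; on $V'$ the density of $A$ is $0.45$ and that of $A^c$ is $0.55$, so $\beta(V') = 0.55 < \beta(V)$. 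The swap from $A$ to $A^c$ can cost more than it gains, and there is no bound on the number of refinements. A second, independent issue: in the nested chain $V_0 \supseteq V_1 \supseteq \cdots$, you rely on condition (ii) --- that $\E_{V_k} 1_A$ and $\E_{x_k + V_k} 1_A$ be close --- but nothing in the iteration enforces this, and Theorem 1.1 gives no control over which coset representative $x_k$ appears or over the density it carries relative to $V_k$ itself.

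The difficulties are not incidental: if a subspace density increment of the kind you propose went through, one would get bounds like $\codim V \ll \eps^{-O(1)}$ and the argument would show no sensitivity to the characteristic, contradicting the $\F_3$ counterexample of Section~\ref{f3-example}. The paper's actual proof is of a genuinely different character. It applies the arithmetic regularity lemma of \cite{gre::02} to obtain a $W$ on which $A$ is $\eps$-uniform on \emph{almost every} coset $x + W$; colours the (good) cosets by the approximate density of $A$ on them; invokes the Graham--Rothschild theorem (Proposition~\ref{finite-union}, an almost-colouring variant) to find independent $x_1,\dots,x_d$ with all non-empty subset sums $\sum_{i \in I} x_i$ the same colour; and sets $V = W + \langle x_1,\dots,x_d\rangle$. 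The matched densities, together with the identity $\sum_{I\subseteq [d]} (-1)^{r\cdot\sum_{i\in I} x_i} = \prod_i (1 + (-1)^{r\cdot x_i}) = 0$ for $r \in W^\perp\setminus V^\perp$, produce the cancellation you hoped to get from the density increment. The Ramsey input is exactly the mechanism that enforces your condition~(ii), and its absence from your proposal is the gap; its presence explains both the tower-of-towers bound and why $\F_2$ is special (the subset-sum structure underlying Graham--Rothschild).
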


\emph{Remarks.} Sadly, the implied constant in $\eps$ is atrocious, being a tower of towers of height $O(\eps^{-1})$. It would be interesting to get a better bound. Note that it is quite permissible for $A \cap V$ to be empty, and indeed this is generally unavoidable, as the example $A = \{x : x_1 = 1\}$ shows. 

In Section \ref{f3-example} we give a simple example to show that this statement is not true when $\F_2$ is replaced by $\F_3$. 

\section{An example over $\F_3$}\label{f3-example}

In this section we give a simple example to show that the analogue of Theorem \ref{mainthm} is false over $\F_3$ (similar examples may be constructed over other prime fields). The example comes from the literature on Rado's theorem over finite fields, in particular from \cite{BDH}. Indeed, if Theorem \ref{mainthm} \emph{had} been true over $\F_3$ it would have implied that every homogeneous equation in three or more variables is partition regular in $\F_3^{\N}$, a result which is known to be false.

\begin{theorem}\label{thm.main2}
There is a set $A \subset \F_3^n$ such that for any subspace $V \leq \F_3^n$ of positive dimension we have
\[ \sup_{r \notin V^{\perp}} |(1_A d\mu_V)^{\wedge}(r)| \geq \frac{\sqrt{3}}{6}.\]
\end{theorem}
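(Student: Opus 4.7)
The plan is to take
\[ A := \{ x \in \F_3^n \setminus \{0\} : \text{the first nonzero coordinate of } x \text{ equals } 1\},\]
the standard ``leading coefficient'' $2$-colouring of $\F_3^n \setminus \{0\}$ familiar from Rado-type arguments. Its complement in $\F_3^n \setminus \{0\}$ is $-A$, so for any subspace $V$ of positive dimension $A \cap V$ is a proper nonempty subset of $V \setminus \{0\}$ of cardinality $(|V|-1)/2$.

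The essential observation is that $A$ is self-similar under restriction to subspaces. Given $V \leq \F_3^n$ with $d := \dim V \geq 1$, take the unique reduced-row-echelon basis $v_1, \dots, v_d$ of $V$: each $v_i$ has a leading $1$ in some position $p_i$ with $p_1 < \cdots < p_d$, and $(v_j)_{p_i} = 0$ for $j \neq i$. Identifying $V$ with $\F_3^d$ via $c \mapsto \sum_i c_i v_i$, I claim that $A \cap V$ corresponds exactly to
\[ A_d := \{c \in \F_3^d \setminus \{0\} : \text{first nonzero coordinate of } c \text{ is } 1\}.\]
Indeed, if $j = \min\{i : c_i \neq 0\}$, the echelon property forces the coordinates of $\sum_i c_i v_i$ to vanish in positions $< p_j$ and to equal $c_j$ at position $p_j$. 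It follows that for any character $s \in \F_3^d$, with the corresponding pullback $r \in \F_3^n$ (any $r$ with $r \cdot v_i = s_i$), one has $\widehat{1_A d\mu_V}(r) = \widehat{1_{A_d}}(s)$; moreover $r \in V^\perp$ iff $s = 0$. So it suffices to find, for each $d \geq 1$, one nonzero $s$ with $|\widehat{1_{A_d}}(s)| \geq \sqrt{3}/6$.

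I take $s = e_1$. Splitting $A_d$ according to the value of $c_1$ (which cannot be $2$), the stratum $c_1 = 1$ contributes $3^{d-1}$ terms of weight $e_3(-1)$ and the stratum $c_1 = 0$ contributes $|A_{d-1}| = (3^{d-1}-1)/2$ terms of weight $1$. Using $e_3(-1) = -\tfrac{1}{2} - \tfrac{i\sqrt{3}}{2}$, a short simplification gives
\[ \widehat{1_{A_d}}(e_1) = \frac{e_3(-1)}{3} + \frac{3^{d-1} - 1}{2 \cdot 3^d} = -\frac{i\sqrt{3}}{6} - \frac{1}{2 \cdot 3^d},\]
whose modulus is $\sqrt{\tfrac{1}{12} + \tfrac{1}{4 \cdot 9^d}} \geq \sqrt{3}/6$. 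The only step that needs care is the self-similarity reduction, but with a correctly chosen echelon basis it reduces to an immediate check; the constant $\sqrt{3}/6$ then drops out of the explicit computation and is in fact tight as $d \to \infty$, which accounts for the exact value in the statement.
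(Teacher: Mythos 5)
Your proposal is correct and uses the same example set (the ``first nonzero coordinate equals $1$'' set) and, after the change of basis, picks out the same character as the paper: your $s = e_1$, pulled back via a reduced echelon basis of $V$, is precisely the character $r = e_j$ for $j$ the first pivot position. The paper just skips the explicit self-similarity reduction and reads off $\operatorname{Im}\widehat{1_A\,d\mu_V}(e_j) = -\tfrac{\sqrt 3}{2}\mu_V(\{x\in V : x_j=1\}) = -\tfrac{\sqrt 3}{6}$ directly, which is your computation in a slightly more compressed form.
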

\begin{proof}
Take $A = \{ x \in \F_3^n : \mbox{there exists $i$ such that } \; x_1 = \dots = x_i = 0, x_{i + 1} = 1\}$.
Let $V \leq \F_3^n$ be a subspace of positive dimension, and let $j \in [n]$ be minimal such that $v_j \neq 0$ for at least one $v \in V$. Of course, we then have $x_1 = \dots = x_{j-1} = 0$ for all $x \in V$. It follows that \begin{equation}\label{inclu-1}\{ x \in V: x_j = 1\} \subset A,\end{equation} whilst \begin{equation}\label{inclu-2}\{x \in V: x_j = 2\} \cap A = \emptyset.\end{equation} Take $r \in \F_3^n$ to have $r_1 = \dots = r_{j-1},r_{j+1},\dots,r_n = 0$ and $r_j = 1$. Then $r \notin V^{\perp}$, since $r \cdot v \neq 0$. Furthermore, a short computation using \eqref{inclu-1} and \eqref{inclu-2} gives
\[ \mbox{Im} \big((1_A d\mu_V)^{\wedge}(r)\big) = -\frac{\sqrt{3}}{2} \mu_V \big( \{ x \in V: x_j = 1\}\big) = - \frac{\sqrt{3}}{6},\]
the second equality being a consequence of the fact that the map $V \rightarrow \F_3$ given by $x \mapsto x_j$ is linear and nontrivial.
\end{proof}

\section{A Ramsey result for almost colourings}

By a $(1 - \delta)$-almost $r$-colouring of a set $X$, we mean a map $c : \tilde X \rightarrow [r]$ where $|X \setminus \tilde X| \leq \delta |X|$.

\begin{proposition}\label{finite-union}
Let $r,d$ be integers. Then there is an $\eta(r,d) > 0$ such that the following is true. If $n \geq n_0(r,d)$ is sufficiently large, $\eta \in [0,\eta(r,d)]$, and if we have a $(1 - \eta)$-almost $r$-colouring of $\F_2^n$, then there are linearly independent $x_1,\dots, x_d$ such that all of the sums $\sum_{i \in I} x_i$, $I \subset [d]$, $I \neq \emptyset$, are the same colour.
\end{proposition}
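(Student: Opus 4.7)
The plan is to reduce the almost-colouring case to the honest (i.e., $\eta = 0$) case by means of a random restriction to a subspace, and then invoke a classical Ramsey-type theorem for $\F_2$-vector spaces. The key input is the vector-space Ramsey theorem of Graham--Leeb--Rothschild, which provides $N = N(r,d)$ such that for every honest $r$-colouring $\chi \colon \F_2^N \setminus \{0\} \to [r]$ one can find linearly independent $y_1, \dots, y_d \in \F_2^N$ with $\chi\bigl(\sum_{i \in I} y_i\bigr)$ the same for all non-empty $I \subset [d]$; equivalently, a $d$-dimensional linear subspace of $\F_2^N$ whose $2^d - 1$ non-zero vectors are monochromatic. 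This is precisely the $\eta = 0$ case of the proposition.

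Given the almost-colouring $c \colon \tilde X \to [r]$ with $|\F_2^n \setminus \tilde X| \leq \eta \cdot 2^n$, I would then select an $N$-dimensional subspace $W \leq \F_2^n$ uniformly at random. A standard Gaussian-binomial computation yields $\P(v \in W) = (2^N - 1)/(2^n - 1)$ for every non-zero $v \in \F_2^n$, so by linearity of expectation
\[ \E\bigl|(W \setminus \{0\}) \setminus \tilde X\bigr| \leq \eta \cdot 2^n \cdot \frac{2^N - 1}{2^n - 1} \leq 2^{N + 1} \eta \]
as soon as $n \geq N + 1$. Taking the threshold $\eta(r,d) := 2^{-N-2}$ and $n_0(r,d) := N + 1$ makes this expectation at most $1/2$, so with positive probability one has $W \setminus \{0\} \subset \tilde X$. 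For any such $W$, the map $c$ restricts to an honest $r$-colouring of $W \setminus \{0\}$.

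Applying the Ramsey input (via any linear isomorphism $W \cong \F_2^N$) to this restricted colouring produces the desired linearly independent $x_1, \dots, x_d \in W \subseteq \F_2^n$ whose non-empty sums all receive the same colour. The probabilistic reduction is a standard manoeuvre; the real content of the proposition is the Graham--Leeb--Rothschild input, which is a deep but classical Ramsey theorem. The tower-of-towers quantitative bound foreshadowed in the remark after Theorem \ref{mainthm} presumably traces back to the notoriously weak (tower-type) bounds on $N(r,d)$ coming from Hales--Jewett and its relatives.
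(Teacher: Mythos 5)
Your argument is correct and essentially identical to the paper's: both deduce the $\eta=0$ case from a vector-space Ramsey input (you cite Graham--Leeb--Rothschild with $k=1$; the paper cites Graham--Rothschild's finite union theorem, but over $\F_2$ the non-empty subset sums of a linearly independent family are exactly the non-zero vectors of its span, so the two inputs coincide for this application), and both handle $\eta>0$ by averaging over $m$-dimensional (resp.\ $N$-dimensional) subspaces of $\F_2^n$ to find one that avoids the uncoloured set entirely. Your random-subspace expectation calculation is just the probabilistic rephrasing of the paper's uniform-covering plus pigeonhole step, with the same threshold $\eta(r,d)\approx 2^{-m}$.
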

\begin{proof}
In the case $\eta = 0$ (that is, genuine $r$-colourings rather than almost- colourings) this follows quickly from a well-known theorem of Graham and Rothschild \cite[Corollary 1]{gr2} (for a short proof see \cite{nesetril-rodl}). Indeed, our colouring of $\F_2^n$ induces a colouring of the power set $\mathcal{P}([n])$ via the usual identification of these two sets by characteristic functions. In the power set $\mathcal{P}([n])$, the theorem of Graham and Rothschild guarantees that if $n$ is sufficiently large then there are disjoint subsets $S_1,\dots, S_d \subset [n]$ such that every nontrivial union $\bigcup_{i \in I} S_i$, $I \subset [d]$, $I \neq \emptyset$, is the same colour. These sets pull back under the identification to give $x_1,\dots, x_d$ with the claimed property.

We may deduce the stronger result claimed (that is, with $\eta > 0$) by a simple averaging argument. Let $m = m(r,d)$ be a value of $n$ for which the result is true with $\eta = 0$. Now take $\eta(r,d):= 2^{-m-1}$, and suppose $\eta \in [0,\eta(r,d)]$ and we have a $(1 - \eta)$-almost colouring of $\F_2^n$. If $n \geq m+3$, this induces a $(1 - 2\eta(r,d))$-almost colouring of $\F_2^n \setminus \{0\}$. Now $\F_2^n \setminus \{0\}$ is uniformly covered by sets $V \setminus \{0\}$, where $V$ ranges over all $m$-dimensional subspaces of $\F_2^n$. Therefore, by the pigeonhole principle, there is some $V$ for which we get an induced $(1 - 2\eta(r,d))$-almost colouring of $V \setminus \{0\}$. However, since $1 - 2\eta(r,d) \geq 1- 2^{-m} > 1 - \frac{1}{|V \setminus \{0\}|}$, this is in fact a full colouring of $V \setminus \{0\}$. The result follows by the choice of $m$.
\end{proof}

\section{Proof of the main theorem}

Suppose that $A \subset \F_2^n$ is a set and we are given a parameter $\eps \in (0,1]$. Choose integers $d, r$ with $2^d \sim r \sim \frac{1}{\eps}$, and let $\eta = \eta(r,d)$ be the parameter whose existence is guaranteed by Proposition \ref{finite-union}. By the ``arithmetic regularity lemma'' in this context \cite{gre::02}, there is\footnote{Strictly speaking, the argument as presented in \cite{gre::02} does not guarantee a \emph{lower} bound on $\codim W$. However, this may very easily be arranged with a trivial modifcation of the proof, for example by foliating $\F_2^n$ into cosets of some arbitrary subspace of codimension $n_0(r,d)$ and then running the energy increment argument as in that paper.} some subspace $W \leq \F_2^n$, $n_0(r,d) \leq \codim W \ll_{\eps} 1$, such that 
\begin{equation}\label{reg} \sup_{r \notin W^{\perp}} |(1_A \mu_{x + W})^{\wedge}(r)| \leq \eps\end{equation} for a proportion at least $1 - \eta$ of all $x \in \F_2^n$. For notational simplicity, put $X:=\F_2^m$ and change basis so that $W = \{0_X\} \times \F_2^{n - m}$. Let $\tilde X \subset X$ be the set of all $x \in \F_2^m$ for which \eqref{reg} holds. Thus $|\tilde X| \geq (1 - \eta)|X|$. Define an $(r+1)$-colouring $c : \tilde X \rightarrow \{0,1,\dots, r\}$ (and hence a $(1 - \eta)$-almost $(r+1)$-colouring of $X$) by defining $c(x) := \lfloor r \E_{x + V} 1_A  \rfloor$. That is, $c(x) = j$ if the density of $A$ on $x + V$ lies in the range $[\frac{j}{r}, \frac{j+1}{r})$. By Proposition \ref{finite-union}, we may find linearly independent $x_1,\dots, x_d \in \F_2^m$ and $j \in \{0,1,\dots, r\}$ such that $c(\sum_{i \in I} x_i) = j$ for all $I \subset [d]$, $I \neq \emptyset$. 

Set $V := W + \langle x_1,\dots, x_d\rangle$. We claim that 
\begin{equation}\label{to-check} \sup_{r \notin V^{\perp}} |(1_A \mu_V)^{\wedge}(r)| = O(\eps),\end{equation} which (after redefining $\eps$ to $\eps/C$) implies our main theorem.
Suppose first that $r \notin W^{\perp}$. Note that 
\begin{equation}\label{tbn} \mu_V = 2^{-d} \sum_{I \subset [d]} \mu_{W + \sum_{i \in I} x_i}.\end{equation} If $I \neq \emptyset$ we have $\sum_{i \in I} x_i \in \tilde X$, and so in this case
\[ |(1_A \mu_{\sum_{i \in I} x_i + W})^{\wedge}(r)| \leq \eps\] by \eqref{reg}. Summing over all $I \neq \emptyset$, and handling the case $I = \emptyset$ trivially, we have
\[ |(1_A \mu_V)^{\wedge}(r)| \leq 2^{-d} + \eps = O(\eps),\] as desired.
Now suppose that $r \in W^{\perp} \setminus V^{\perp}$. In this case
\[ (1_A \mu_{x + W})^{\wedge}(r) = (-1)^{r \cdot x} \E_{x + W} 1_A.\] Hence from \eqref{tbn} we have
\[ (1_A \mu_V)^{\wedge}(r) = 2^{-d} \sum_{I \subset [d]} (-1)^{r \cdot \sum_{i \in I} x_i} \E_{\sum_{i \in I} x_i + W} 1_A.\]
By construction,
\[ \frac{j}{r} \leq \E_{\sum_{i \in I} x_i + W} 1_A < \frac{j+1}{r}\] whenever $I \neq \emptyset$. It follows that 
\[ (1_A \mu_V)^{\wedge}(r) = \frac{j}{r} 2^{-d} \sum_{I \subset [d]} (-1)^{r \cdot \sum_{i \in I} x_i} + O(2^{-d} + \frac{1}{r}).\]
However
\[ \sum_{I \subset [d]} (-1)^{r \cdot \sum_{i \in I} x_i}  = \prod_{i = 1}^d (1 + (-1)^{r \cdot x_i}),\] and at least one of the factors here vanishes since $r \notin V^{\perp}$. Hence 
\[ (1_A \mu_V)^{\wedge}(r) = O(2^{-d} + \frac{1}{r}) = O(\eps),\] and the proof is complete.

\end{document}